\newcommand{\prob}[1]{\mathbb{P} \left( #1 \right)}
\newcommand{\norm}[1]{\lVert #1 \rVert}
\newcommand{\ceil}[1]{\left\lceil #1 \right\rceil}
\newcommand{\supp}{\mathrm{supp}}
\renewcommand{\hat}{\widehat}
\renewcommand{\tilde}{\widetilde}
\newcommand{\N}{\mathbb{N}}
\newcommand{\Z}{\mathbb{Z}}
\newcommand{\R}{\mathbb{R}}
\newcommand{\C}{\mathbb{C}}
\theoremstyle{plain} 
\newtheorem{theorem}{Theorem}[section]
\newtheorem{lemma}[theorem]{Lemma}
\theoremstyle{definition} 
\newtheorem{definition}[theorem]{Definition}
\theoremstyle{remark} 
\newtheorem{remark}[theorem]{Remark}
\g@addto@macro\endtheorem{\par\vspace{\baselineskip}}
\g@addto@macro\endproposition{\par\vspace{\baselineskip}}
\g@addto@macro\endcorollary{\par\vspace{\baselineskip}}
\g@addto@macro\endlemma{\par\vspace{\baselineskip}}
\g@addto@macro\enddefinition{\par\vspace{\baselineskip}}
\g@addto@macro\endexample{\par\vspace{\baselineskip}}
\g@addto@macro\endconjecture{\par\vspace{\baselineskip}}
\title{Signal Recovery Using Gabor Frames}
\author{Ivan Bortnovskyi}\email{ib538@cam.ac.uk}\address{Department of Pure Mathematics and Mathematical Statistics, Univeristy of Cambridge, Cambridge, United Kingdom, CB3 0WA}
\author{June Duvivier}\email{june@duvivier.us}\address{Department of Mathematics, Reed College, Portland, OR 97202}
\author{Xiaoyao Huang}\email{xyrushac@umich.edu}\address{Department of Mathematics, University of Michigan, Ann Arbor, 48109}
\author{Alex Iosevich}\email{iosevich@math.rochester.edu}\address{Department of Mathematics, University of Rochester
915 Hylan Building
P.O. Box 270138
Rochester, NY 14627}
\author{Say-Yeon Kwon}\email{sk9017@princeton.edu}\address{Department of Mathematics, Princeton University, Princeton, NJ 08544}
\author{Meiling Laurence}\email{meiling.laurence@yale.edu}\address{Department of Mathematics, Yale University, New Haven, CT 06511}
\author{Michael Lucas}\email{ml2130@cam.ac.uk}\address{Department of Pure Mathematics and Mathematical Statistics, Univeristy of Cambridge, Cambridge, United Kingdom, CB3 0WA}
\author{Steven J. Miller}\email{sjm1@williams.edu}\address{Department of Mathematics, Williams College, Williamstown, MA, 01267}
\author{Tiancheng Pan}\email{tp545@cam.ac.uk}\address{Department of Pure Mathematics and Mathematical Statistics, Univeristy of Cambridge, Cambridge, United Kingdom, CB3 0WA}
\author{Eyvindur Palsson}\email{palsson@vt.edu}\address{Department of Mathematics, Virginia Tech, Blacksburg, VA 24061}
\author{Jennifer Smucker}\email{jennifer21@vt.edu}\address{Department of Mathematics, Virginia Tech, Blacksburg, VA 24061}
\author{Iana Vranesko}\email{yv2@williams.edu}\address{Department of Mathematics, Williams College, Williamstown, MA, 01267}
\begin{document}

\begin{abstract}

We present a novel probabilistic framework for the recovery of discrete signals with missing data, extending classical Fourier-based methods. While prior results, such as those of Donoho and Stark (\cite{DonohoStark}; see also Logan's method in \cite{Logan1965}), guarantee exact recovery under strict deterministic sparsity constraints, they do not account for stochastic patterns of data loss. Our approach combines a row-wise Gabor transform with a probabilistic model for missing frequencies, establishing near-certain recovery when losses occur randomly.

The key innovation is a maximal row-support criterion that allows unique reconstruction with high probability, even when the overall signal support significantly exceeds classical bounds. Specifically, we show that if missing frequencies are independently distributed according to a binomial law, the probability of exact recovery converges to $1$ as the signal size grows. This provides, to our knowledge, the first rigorous probabilistic recovery guarantee exploiting row-wise signal structure.

Our framework offers new insights into the interplay between sparsity, transform structure, and stochastic loss, with immediate implications for communications, imaging, and data compression. It also opens avenues for future research, including extensions to higher-dimensional signals, adaptive transforms, and more general probabilistic loss models, potentially enabling even more robust recovery guarantees.

\end{abstract}


\maketitle

\section{Introduction}
\subsection{Background}
The purpose of this paper is to examine a novel approach to the transmission and recovery of discrete signals. The primary object we consider is the discrete signal
\begin{equation*}
    f:\Z_N\times\Z_T \to \C,
\end{equation*} where $\Z_n$ denotes the integers modulo $n$. Suppose that the set 
\begin{equation*}
    \{f(x) : x \in M\}
\end{equation*}
is missing, where $M \subset \Z_N\times\Z_T$. The question we are interested in is under what conditions can we recover the original signal? One of the main tools that has been used to explore this question is the discrete Fourier transform, defined as follows.
\begin{definition}[Discrete Fourier Transform]
Let $f:\Z_N\times \Z_T \to \C$. The discrete Fourier transform, $\hat f: \Z_N \times \Z_T \to \C$, of $f$ is given by
\begin{equation}
    \hat f(m,n) := \frac{1}{\sqrt{NT}}\sum_{x \in \Z_N}\sum_{y \in \Z_T}f(x,y)\text{exp}\left(-2\pi i\left(\frac{xm}{N}+\frac{yn}{T}\right)\right)
\end{equation}
\end{definition}
Donoho and Stark \cite{DonohoStark} showed that for signals in $\Z_N$, if both the signal’s support and the set of missing frequencies are sufficiently small, then the signal can be uniquely recovered \cite[Theorem 1.1]{burstein2025fourier}.
Although their original result was stated in one dimension, for the purpose of illustrating the main ideas of this paper, we will present it in two dimensions using the two-dimensional discrete Fourier transform. By following the same arguments as in \cite{DonohoStark}, one obtains the following result:
\begin{theorem}\label{Donoho Stark 1}
Let $f:\Z_N\times\Z_T \to \C$, and suppose we transmit the frequencies $\hat{f}$, but the values of $\hat{f}$ are missing in $M \subset \Z_N\times\Z_T$. If $f$ is supported in $E \subset \Z_N\times\Z_T$ and 
\begin{equation}
   \displaystyle|E||M|< \frac{NT}{2}, 
\end{equation} then $f$ can be recovered exactly using Logan’s method \cite{Logan1965}, which consists of finding $f = \arg\min_{g} \|g\|_{L^1(\Z_N \times \Z_T)}$ subject to $\hat{g}(m) = \hat{f}(m)$ for all $m \notin M$.

\end{theorem}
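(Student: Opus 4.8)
The plan is to show directly that $f$ is the unique minimizer of the $\ell^1$ problem. Let $g$ be any feasible competitor, so that $\hat g(m) = \hat f(m)$ for all $m \notin M$ and $\norm{g}_1 \le \norm{f}_1$; it suffices to prove $g = f$. Setting $h := g - f$, linearity of the transform gives $\hat h(m) = 0$ for all $m \notin M$, so $\hat h$ is supported in $M$ and $\abs{\Supp \hat h} \le \abs{M}$. The whole argument then reduces to controlling how much $\ell^1$-mass such a band-limited $h$ can place on the small support set $E$.

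The technical heart is the concentration estimate
\[
\sum_{x \in E} \abs{h(x)} \;\le\; \frac{\abs{E}\,\abs{M}}{NT}\,\norm{h}_1 .
\]
I would obtain this from a sup-norm bound. Since $\hat h$ is supported in $M$, the inversion formula and the triangle inequality give $\norm{h}_\infty \le \tfrac{1}{\sqrt{NT}}\norm{\hat h}_1$, while Cauchy--Schwarz over the support together with Plancherel ($\norm{\hat h}_2 = \norm{h}_2$) yield $\norm{\hat h}_1 \le \sqrt{\abs{M}}\,\norm{h}_2$. Combining these with the interpolation inequality $\norm{h}_2^2 \le \norm{h}_\infty \norm{h}_1$ and squaring produces
\[
\norm{h}_\infty \;\le\; \frac{\abs{M}}{NT}\,\norm{h}_1 ,
\]
after which $\sum_{x\in E}\abs{h(x)} \le \abs{E}\,\norm{h}_\infty$ gives the displayed estimate. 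This is precisely the discrete uncertainty-principle input, and it is where the threshold $NT$ (and ultimately the factor $1/2$) originates.

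Next I would exploit feasibility and minimality. Because $f$ vanishes off $E$, splitting the $\ell^1$ norm and applying the reverse triangle inequality on $E$ gives
\[
\norm{g}_1 = \sum_{x \in E}\abs{f(x)+h(x)} + \sum_{x \in E^c}\abs{h(x)} \;\ge\; \norm{f}_1 - \sum_{x\in E}\abs{h(x)} + \sum_{x \in E^c}\abs{h(x)} .
\]
Since $\norm{g}_1 \le \norm{f}_1$, this forces $\sum_{E^c}\abs{h} \le \sum_{E}\abs{h}$. On the other hand, the concentration estimate together with the hypothesis $\abs{E}\abs{M} < NT/2$ gives $\sum_{E}\abs{h} < \tfrac12\norm{h}_1 = \tfrac12\big(\sum_E\abs{h} + \sum_{E^c}\abs{h}\big)$, that is, $\sum_{E}\abs{h} < \sum_{E^c}\abs{h}$. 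The two inequalities are incompatible unless $\norm{h}_1 = 0$, whence $h = 0$ and $g = f$.

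The main obstacle I anticipate is establishing the concentration/uncertainty estimate with the correct constant: every step must respect the two-dimensional normalization $1/\sqrt{NT}$ of the transform, and the sharpness of the $1/2$ threshold depends on assembling the inversion bound, Cauchy--Schwarz, Plancherel, and the $L^2$--$L^\infty$--$L^1$ interpolation in exactly the right order. Once that inequality is in hand, the minimality comparison is short and routine.
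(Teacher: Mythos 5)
Your proof is correct, and it is essentially the same argument the paper relies on: the paper gives no proof of its own for this theorem, deferring to \cite{DonohoStark}, and your chain (inversion bound, Cauchy--Schwarz over $M$, Plancherel, and $L^2$--$L^\infty$--$L^1$ interpolation yielding $\norm{h}_\infty \le \tfrac{|M|}{NT}\norm{h}_1$, followed by the $\ell^1$-minimality comparison on $E$ versus $E^c$) is precisely the Donoho--Stark concentration argument transplanted to the two-dimensional setting with the unitary $1/\sqrt{NT}$ normalization. The constants work out exactly as claimed, so nothing further is needed.
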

The Fourier inversion theorem gives us the following equivalent version of Theorem \ref{Donoho Stark 1}.
\begin{theorem}\label{Donoho Stark 2}
  Let $f:\Z_N\times\Z_T \to \C$, and suppose we transmit $f$, but the values of $f$ are missing in $M \subset \Z_N\times\Z_T$. If $\hat f$ is supported in $E \subset \Z_N\times\Z_T$ and 
\begin{equation}
   \displaystyle|E||M|< \frac{NT}{2}, 
\end{equation} then $f$ can be recovered exactly using Logan’s method \cite{Logan1965}, which consists of finding $f = \arg\min_g\norm{\hat g}_{L^1(\Z_N\times\Z_T)}$ subject to $f(m) = g(m)$ for all $m \notin M$.
\end{theorem}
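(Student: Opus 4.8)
The plan is to deduce Theorem \ref{Donoho Stark 2} directly from Theorem \ref{Donoho Stark 1} by exploiting the fact that the discrete Fourier transform on $\Z_N\times\Z_T$ is an involution up to reflection. Concretely, iterating the inversion formula gives $\hat{\hat f}(a,b)=f(-a,-b)$, so applying the transform twice returns the original signal precomposed with the reflection $(a,b)\mapsto(-a,-b)$. The symmetric normalization $1/\sqrt{NT}$ ensures that the forward and inverse transforms differ only by the sign in the exponent, so no stray constants intervene. Since this reflection is a bijection of $\Z_N\times\Z_T$, it preserves all cardinalities, which is exactly what I need to keep the hypothesis $\abs{E}\abs{M}<NT/2$ intact when swapping the roles of a function and its transform.

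First I would set $F:=\hat f$ and apply Theorem \ref{Donoho Stark 1} with $F$ in place of $f$. The support hypothesis transfers cleanly: in Theorem \ref{Donoho Stark 2} we assume $\hat f$ is supported in $E$, which is precisely the statement that $F$ is supported in $E$. Next I would track the missing set. Since $\hat F(a,b)=f(-a,-b)$ and $f$ is missing exactly on $M$, the transform $\hat F$ is missing exactly on the reflected set $-M:=\{(-a,-b):(a,b)\in M\}$. Because $\abs{-M}=\abs{M}$, the product condition becomes $\abs{E}\abs{-M}=\abs{E}\abs{M}<NT/2$, so the hypothesis of Theorem \ref{Donoho Stark 1} is met and $F=\hat f$ is recovered exactly as the minimizer of $\norm{G}_{L^1(\Z_N\times\Z_T)}$ over signals $G$ satisfying $\hat G=\hat F$ off $-M$.

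The remaining step is to re-express this minimization as the one in the statement. Here I would substitute $G=\hat g$, so that the objective $\norm{G}_{L^1(\Z_N\times\Z_T)}=\norm{\hat g}_{L^1(\Z_N\times\Z_T)}$ matches, and compute $\hat G=\hat{\hat g}=g(-\cdot)$. Under this substitution the constraint $\hat G=\hat F$ off $-M$ reads $g(-a,-b)=f(-a,-b)$ for $(a,b)\notin -M$, which after the change of variables $(c,d)=(-a,-b)$ is exactly $g(c,d)=f(c,d)$ for $(c,d)\notin M$. Thus the two optimization problems coincide, and Theorem \ref{Donoho Stark 1} identifies the minimizer as $\hat g=\hat f$, equivalently $g=f$ by injectivity of the transform.

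I do not expect a genuine analytic obstacle, since this is a duality argument; the only real work is bookkeeping. The point demanding the most care is the reflection: one must verify the precise form of the involution $\hat{\hat f}(a,b)=f(-a,-b)$ under the chosen normalization, and confirm that replacing $M$ by $-M$ leaves every cardinality—and hence the strict inequality $\abs{E}\abs{M}<NT/2$—unchanged. Once the reflection is handled correctly, the equivalence of the two recovery problems, and therefore the theorem, follows immediately.
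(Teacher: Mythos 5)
Your proposal is correct and takes essentially the same route as the paper, which obtains Theorem \ref{Donoho Stark 2} from Theorem \ref{Donoho Stark 1} purely by Fourier inversion (duality). Your write-up simply makes explicit the bookkeeping the paper leaves implicit: the involution $\hat{\hat f}(a,b)=f(-a,-b)$, the replacement of the missing set $M$ by $-M$ (which preserves cardinality, hence the hypothesis $|E||M|<NT/2$), and the identification of the two $L^1$-minimization problems under the substitution $G=\hat g$.
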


Specifically, Theorems \ref{Donoho Stark 1} and \ref{Donoho Stark 2} are both derived from the two-dimensional classical uncertainty principle, which can be easily generalized from the one-dimensional uncertainty principle stated by Donoho and Stark \cite{DonohoStark}.

\begin{theorem}
Let $f:\Z_N \times \Z_T \to \C$ be nonzero. Then,
\begin{align}
    |\supp(f)||\supp(\widehat{f})| \geq NT.
\end{align}
\end{theorem}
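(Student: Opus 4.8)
The plan is to play the forward Fourier transform and its inversion against each other at the level of sup-norms, and then combine the two resulting estimates. First I would invoke the Fourier inversion theorem to write, for each $(x,y) \in \Z_N \times \Z_T$,
\[
f(x,y) = \frac{1}{\sqrt{NT}} \sum_{m \in \Z_N} \sum_{n \in \Z_T} \hat{f}(m,n)\, \exp\!\left(2\pi i\left(\tfrac{xm}{N} + \tfrac{yn}{T}\right)\right).
\]
Taking absolute values and applying the triangle inequality, every exponential factor has modulus $1$, so the sum collapses onto $\supp(\hat{f})$ and each surviving term is at most $\norm{\hat{f}}_\infty$. Taking the supremum over $(x,y)$ then yields the first key estimate,
\[
\norm{f}_\infty \leq \frac{1}{\sqrt{NT}}\, \abs{\supp(\hat{f})}\, \norm{\hat{f}}_\infty.
\]

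Next I would run the identical argument on the \emph{defining} formula of $\hat{f}$ rather than on the inversion formula; the structure is the same up to the sign in the exponent, which again does not affect moduli. This gives the symmetric estimate
\[
\norm{\hat{f}}_\infty \leq \frac{1}{\sqrt{NT}}\, \abs{\supp(f)}\, \norm{f}_\infty.
\]
Substituting the second inequality into the first produces
\[
\norm{f}_\infty \leq \frac{1}{NT}\, \abs{\supp(f)}\, \abs{\supp(\hat{f})}\, \norm{f}_\infty.
\]
Since $f$ is nonzero we have $\norm{f}_\infty > 0$, so dividing both sides by $\norm{f}_\infty$ and rearranging gives exactly $\abs{\supp(f)}\,\abs{\supp(\hat{f})} \geq NT$, as claimed.

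The argument is short, and there is no deep obstacle; the point requiring the most care is the bookkeeping of the normalizing constant. Both the forward transform and its inversion carry a factor $\tfrac{1}{\sqrt{NT}}$, and it is precisely the product $\tfrac{1}{\sqrt{NT}} \cdot \tfrac{1}{\sqrt{NT}} = \tfrac{1}{NT}$ that converts the two one-sided bounds into the sharp constant $NT$ in the final inequality. The only logical subtlety is justifying the division at the end: one must confirm $\norm{f}_\infty > 0$, which is immediate from the nonvanishing of $f$, and this in turn forces $\norm{\hat{f}}_\infty > 0$ so that neither support set is empty. No extremal or variational input is needed, in contrast to the recovery statements in Theorems \ref{Donoho Stark 1} and \ref{Donoho Stark 2}, which deduce uniqueness from this inequality.
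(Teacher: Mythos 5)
Your argument is correct, and every step checks out against the paper's normalization: with the unitary factor $\tfrac{1}{\sqrt{NT}}$ in both the forward transform and the inversion formula, the two sup-norm estimates
\begin{align*}
    \norm{f}_\infty \leq \frac{1}{\sqrt{NT}}\abs{\supp(\hat f)}\,\norm{\hat f}_\infty, \qquad \norm{\hat f}_\infty \leq \frac{1}{\sqrt{NT}}\abs{\supp(f)}\,\norm{f}_\infty
\end{align*}
chain together exactly as you claim, and the division by $\norm{f}_\infty > 0$ is legitimate since $f$ is nonzero. It is worth noting, however, that the paper offers no proof of this statement at all: it simply asserts that the two-dimensional principle "can be easily generalized" from the one-dimensional uncertainty principle of Donoho and Stark \cite{DonohoStark}, so your write-up supplies details the paper leaves implicit. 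Your route is also not the one Donoho and Stark themselves take: their original argument is operator-theoretic, comparing the operator norm and the Hilbert--Schmidt norm of the composition of a time-limiting and a band-limiting projection (the operator must have norm at least $1$ if a nonzero $f$ is fixed by it, while its Hilbert--Schmidt norm is $\sqrt{\abs{\supp(f)}\abs{\supp(\hat f)}/N}$). Your elementary $\ell^1$--$\ell^\infty$ chaining argument buys simplicity and self-containedness, generalizes verbatim to $\Z_N \times \Z_T$ (or any finite abelian group), and avoids any operator machinery; the Donoho--Stark formulation buys a framework that extends naturally to the approximate-support and stable-recovery statements that their paper pursues afterward. One small economy: in your substitution approach the positivity of $\norm{\hat f}_\infty$ is never actually needed—only $\norm{f}_\infty > 0$ is used—so your closing remark about neither support being empty, while true, is not required by the logic.
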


We use Theorems \ref{Donoho Stark 1} and \ref{Donoho Stark 2} together to achieve unique recovery with high probability, even when the probability of losing values is much higher than what the classical recovery condition allows. Applying the theorems in conjunction requires a new framework, known as the Gabor transform, which we borrow from continuous harmonic analysis and adapt to our discrete setting. First, we define the continuous Gabor transform. See, for example, \cite{grochenig2001}, for a thorough description of Gabor transforms and their properties. 

\begin{definition}[Continuous Gabor Transform]
Let $f:\R \to \C$ be (Lebesgue) integrable. The Gabor transform of $f$ is defined by
$$
G_f(\omega, \tau) = \int_{-\infty}^{\infty} f(t) g(t-\tau)e^{-i\omega t}dt,
$$
where $g$ is a window function, commonly taken to be $\displaystyle g(t) =\frac{1}{\sqrt{2\pi \sigma^2}}e^{-\frac{t^2}{2\sigma^2}}$.    
\end{definition}

The window function serves to isolate the function within a short time span, so that we are effectively sending the Fourier transform of the restriction of $f$ to this time span. This is advantageous in that it can be easier to reconstruct $f$ with reasonable accuracy based on knowledge of its approximate frequencies in short periods of time than based on an imperfect transmission of the Fourier transform of the entire function.

In the discrete setting, rather than taking the window function to be the normal distribution, we can take the window function to restrict $f$ to particular rows or columns in its domain, which we do in the following definitions. See \cite{IKLM2021} for some example of Gabor transforms in the finite setting. 

\begin{definition}[Gabor Transform Along a Row]
Given a function $f: \Z_N \times \Z_T \to \mathbb{C}$, we define its row-wise Gabor Transform $Gf: \mathbb{Z}_N\times \mathbb{Z}_T \to \mathbb{C}$ by 
\begin{align*}
    Gf(m,a) := N^{-1/2}\sum_{t \in \mathbb{Z}_N} f(t,a) e^{-\frac{2\pi im\cdot t}{N}}, 
\end{align*}
i.e., $Gf(m,a) := \widehat{f(\text{---},a)}(m).$
\\
We thus have the inverse Gabor transform given by
\begin{align*}
    f(t,a) = N^{-1/2}\sum_{m \in \mathbb{Z}_N}Gf(m,a) e^{\frac{2\pi it\cdot m}{N}}.
\end{align*}
\end{definition}

The definition for the Gabor transform along a column is similar.

\begin{definition}[Gabor Transform Along a Column]
Given a function $f: \Z_N \times \Z_T \to \mathbb{C}$, we define its column-wise Gabor Transform $\tilde{Gf}: \mathbb{Z}_N\times \mathbb{Z}_T \to \mathbb{C}$ by 
\begin{align*}
    \tilde{Gf}(t,n) := N^{-1/2}\sum_{a \in \mathbb{Z}_T} f(t,a) e^{-\frac{2\pi in\cdot a}{T}}, 
\end{align*}
i.e., $\tilde{Gf}(t,n) := \widehat{f(t,\text{---})}(n).$
\\
We thus have the inverse Gabor transform given by
\begin{align*}
    f(t,a) = N^{-1/2}\sum_{n \in \mathbb{Z}_T}\tilde{Gf}(t,n) e^{\frac{2\pi ia\cdot n}{T}}.
\end{align*}    
\end{definition}

\subsection{New Results}

Now that we have established the theoretical framework, we can state our main results. In the following theorem, we strengthen the classical unique recovery condition for the case where frequencies are intercepted according to a binomial model, by transmitting row-wise Gabor transforms instead of the Fourier transform of the function $f: \Z_N \times \Z_T \to \C.$ See, for example, \cite{burstein2025fourier}, \cite{CRT06}, and \cite{IM2025} for related example of probabilistic ideas in signal recovery. 

\begin{theorem}\label{main theorem}
    Suppose $f: \Z_N \times \Z_T \to \C$, where $T: \N \to \N$ such that $T(N) = o\left(\sqrt{N}e^N\right)$\footnote{We use standard asymptotic notation: $f(N) = O(g(N))$ means $|f(N)| \leq C|g(N)|$ for some constant $C > 0$ and sufficiently large $N$, while $f(N) = o(g(N))$ means $\lim_{N \to \infty} f(N)/g(N) = 0$.}, and define 
    \begin{align*}
        E_{\max} := \max_{a \in \Z_T}|\text{supp}_t(f(t,a))|.
    \end{align*}
    Suppose we transmit $Gf(m,a)$ for all $(m,a) \in \Z_N \times \Z_T$ and that the distribution of lost frequencies is binomial with fixed probability $\displaystyle 0< \theta < \frac{1}{2E_{\max}}$. Let $M$ be the set of missing frequencies and define 
    \begin{align*}
     &M_{\max}:=\operatorname{max}_{a \in \Z_T}|M\cap \{Gf(t,a): t \in \Z_N\}|, 
     \\&M_{\min}:=\operatorname{min}_{a \in \Z_T}|M\cap \{Gf(t,a): t \in \Z_N\}|.
    \end{align*}
As $N \to \infty$, $\displaystyle \mathbb{P}\left( M_{\max} < \frac{N}{2E_{\max}} \right) \to 1$, which implies that the probability of unique recovery converges to $1$.\\
Furthermore, for $\displaystyle\theta > \frac{1}{2E_{\max}}$, as $N \to \infty$, $\displaystyle \mathbb{P}\left( M_{\min} < \frac{N}{2E_{\max}} \right) \to 0$.
\end{theorem}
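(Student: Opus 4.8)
The plan is to decouple the two-dimensional problem into $T$ independent one-dimensional recovery problems, one per row, and then control the worst (resp. best) row by feeding a large-deviation estimate into a union bound. First I would observe that, because $Gf(\cdot,a) = \widehat{f(\cdot,a)}$, each row $a$ is an ordinary signal on $\Z_N$ whose full Fourier transform we transmit, with missing set $M_a := M \cap \{Gf(t,a) : t \in \Z_N\}$. The one-dimensional Donoho--Stark theorem \cite{DonohoStark} (the $\dim = 1$ case of Theorem \ref{Donoho Stark 1}) recovers row $a$ by Logan's method as soon as $|\supp_t f(t,a)|\cdot |M_a| < N/2$. Since $|\supp_t f(t,a)| \le E_{\max}$, the bound $|M_a| < \tfrac{N}{2E_{\max}}$ is sufficient for that row, and hence the single event $\{M_{\max} < \tfrac{N}{2E_{\max}}\}$ forces $|E_a||M_a| < N/2$ simultaneously for every $a$, so the whole signal is reconstructed row by row. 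This is exactly the implication ``$\Rightarrow$ unique recovery,'' so it remains to estimate the probability of that event.

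For the first claim, the binomial loss model makes each of the $N$ frequencies in a fixed row absent independently with probability $\theta$, so the counts $|M_a|$ are i.i.d.\ $\mathrm{Bin}(N,\theta)$ and $M_{\max} = \max_{a} |M_a|$. Writing $p^\ast = \tfrac{1}{2E_{\max}}$ and $k = p^\ast N$, the hypothesis $\theta < p^\ast$ places $k$ strictly above the common mean $N\theta$, so $\{|M_a| \ge k\}$ is an upper-tail event. I would bound
\[
\mathbb{P}\!\left(M_{\max} \ge k\right) \le \sum_{a \in \Z_T} \mathbb{P}\!\left(|M_a| \ge k\right) = T\,\mathbb{P}\!\left(\mathrm{Bin}(N,\theta) \ge k\right),
\]
and then apply a Chernoff/large-deviation estimate to the single binomial tail. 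Sharpening the exponent with Stirling's formula yields a bound of the form $\mathbb{P}(\mathrm{Bin}(N,\theta) \ge k) \le C\,N^{-1/2} e^{-N\,D(p^\ast \,\|\, \theta)}$, where $D(p^\ast \,\|\, \theta) > 0$ is the relative entropy of $p^\ast$ against $\theta$. The growth hypothesis $T = o(\sqrt{N}\,e^{N})$ then makes $T\,N^{-1/2}e^{-N D(p^\ast\|\theta)} \to 0$, giving $\mathbb{P}(M_{\max} < k) \to 1$.

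The ``furthermore'' part is the mirror image. For $\theta > p^\ast$ the mean $N\theta$ now exceeds $k$, so $\{|M_a| < k\}$ becomes the rare lower-tail event. Using independence across rows,
\[
\mathbb{P}\!\left(M_{\min} < k\right) = 1 - \bigl(1 - \mathbb{P}(\mathrm{Bin}(N,\theta) < k)\bigr)^{T} \le T\,\mathbb{P}\!\left(\mathrm{Bin}(N,\theta) < k\right),
\]
and the lower-tail Chernoff bound again gives $\mathbb{P}(\mathrm{Bin}(N,\theta) < k) \le C' N^{-1/2} e^{-N\,D(p^\ast\,\|\,\theta)}$, so the same admissible growth of $T$ forces $\mathbb{P}(M_{\min} < k) \to 0$. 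Interpreted through the first paragraph, this says that with probability tending to $1$ \emph{every} row carries at least $k$ lost frequencies, so in the maximal-support row Donoho--Stark's sufficient condition fails; I would state this as failure of the guarantee rather than as an information-theoretic impossibility of recovery.

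The step I expect to be the main obstacle is obtaining a tail estimate that is sharp enough, in both the exponent and the $N^{-1/2}$ prefactor, to be beaten by the prescribed growth of $T$: a crude Chernoff bound only controls the exponential rate $e^{-cN}$ and discards the polynomial factor, whereas matching the hypothesis $T = o(\sqrt{N}\,e^{N})$ requires retaining the Stirling prefactor so that the allowed growth of $T$ and the per-row tail cancel at precisely the right order. A secondary point to verify carefully is the independence of the $T$ row-wise loss counts, which is what upgrades the union bound to the exact product form used in the lower-tail argument, and the bookkeeping of strict versus non-strict inequalities so that the threshold $k = \tfrac{N}{2E_{\max}}$ aligns with the strict inequality in Theorem \ref{Donoho Stark 1}.
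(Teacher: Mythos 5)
Your proposal follows the paper's proof essentially step for step: the same row-wise decoupling into $T$ i.i.d.\ $\mathrm{Bin}(N,\theta)$ loss counts, the same reduction of recovery to the one-dimensional Donoho--Stark condition in each row, the product structure across rows (your union bound $T\,\prob{X \ge Nk}$ is an immaterial weakening of the paper's exact product $\bigl(1-\prob{X \ge Nk}\bigr)^{T}$), and the same Stirling-sharpened binomial tail estimate with the $N^{-1/2}$ prefactor, which the paper packages as Lemma \ref{prob lemma}. Your reading of the $M_{\min}$ statement as ``failure of the sufficient condition'' rather than impossibility of recovery is, if anything, more careful than the paper's phrasing.

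There is, however, a genuine gap in your final limiting step: from $T = o(\sqrt{N}e^{N})$ you cannot conclude $T\,N^{-1/2}e^{-N D(p^{\ast}\|\theta)} \to 0$ unless $D(p^{\ast}\|\theta) \ge 1$. The relative entropy $D(p^{\ast}\|\theta)$ is a fixed positive constant, but it tends to $0$ as $\theta \uparrow p^{\ast}$, so it is typically smaller than $1$; for instance, if $D(p^{\ast}\|\theta) < 1$ and $T(N) = \lceil e^{N} \rceil$ (which is $o(\sqrt{N}e^{N})$), then
\[
T\,N^{-1/2}e^{-N D(p^{\ast}\|\theta)} \ \ge\ \frac{e^{N\left(1 - D(p^{\ast}\|\theta)\right)}}{\sqrt{N}} \ \longrightarrow\ \infty .
\]
What your argument (and the paper's) actually delivers is the conclusion for $T = o\bigl(\sqrt{N}\,e^{N D(p^{\ast}\|\theta)}\bigr)$, an exponent that depends on $\theta$ and $E_{\max}$. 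You should know that the paper's own proof makes exactly the same leap: in the last display of the proof of Lemma \ref{prob lemma}, the factor $\exp\left(N[\,\cdot\,]\right)$, whose bracket converges to $-D(k\|\theta)$, is silently replaced by $\exp(-N)$, which is only legitimate when $D(k\|\theta) \ge 1$. So your proposal reproduces the paper's argument faithfully, including this flaw; neither version establishes the theorem for all fixed $\theta < \tfrac{1}{2E_{\max}}$ under the stated growth condition $T = o(\sqrt{N}e^{N})$, and the same issue affects the lower-tail ($M_{\min}$) part.
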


\begin{remark}
The binomial loss model assumes that each transmitted frequency is independently lost with a fixed probability $0 < \theta < 1$. This corresponds to the standard i.i.d.\ Bernoulli erasure model commonly used in signal recovery and information theory. While real-world data transmission often exhibits correlated or bursty loss patterns, the independent loss assumption provides a tractable and analytically convenient framework that captures the essential probabilistic behavior relevant to our asymptotic analysis.

 To contextualize the improvement, consider the following. Suppose $f: \Z_N \times \Z_T \to \C$ has support $E \subset \Z_N \times \Z_T$, and we transmit $\hat{f}$, its Fourier transform in $\Z_N \times \Z_T$. Suppose also that the values of $\hat{f}$ are missing in $M$, and that each element of $M$ is lost independently with probability $\theta$. Then, the expectation of lost frequencies is $NT\theta$. So, according to the classical recovery condition 
    \begin{align*}
     |E||M|<\frac{NT}{2},   
    \end{align*}
    $|E|$ can be as large as 
    \begin{align*}
    \ceil{\frac{NT}{2NT \theta}} -1 = \ceil{\frac{1}{2\theta}}-1    
    \end{align*}
    while guaranteeing unique recovery with high probability for $N$ sufficiently large. In our theorem, we only require 
    \begin{align*}
    E_{\max} < \frac{1}{2\theta},   
    \end{align*}
     which means that $|E|$ can be as large as 
     \begin{align*}
      T \left(\ceil{\frac{1}{2\theta}} - 1\right)    
     \end{align*}
     while nearly guaranteeing unique recovery for $N$ large enough, with the restriction on the growth of $T$ being $T = o\left(\sqrt{N}e^N\right).$ 

For some intuition as to why this is the case, note that in any given $i^{th}$ row, $\Z_N \times\{i\},$ the Fourier transform of $f$ as a function restricted to $\Z_N \times\{i\}\simeq \Z_N$ is given by $Gf(\text{---},i).$ The classical unique recovery condition in one dimension tells us that if $$\displaystyle|\supp(f|_{\Z_N\times\{i\}})||M_i|<\frac{N}{2}, \quad \text{ where } M_i=\{m:Gf(m,i)\text{ is missing}\},$$ then we are guaranteed unique recovery of this $i^{th}$ row. If each element of $M_i$ is lost independently with probability $\theta,$ then the expectation of lost frequencies is $N\theta,$ which means $|\supp(f|_{\Z_N\times\{i\}})|$ can be as large as 
$$\ceil{\frac{N}{2N\theta}}-1 =\ceil{\frac{1}{2\theta}}-1$$
while guaranteeing unique recovery with high probability for $N$ sufficiently large. We want this unique recovery condition to be satisfied in each row, which is equivalent to requiring 
$$E_{max} < \frac{1}{2\theta}.$$ 

Since the support of $f$ on all of $\Z_N\times \Z_T$ can be $T$ times the support allowed for $f$ on each row, this means that by transmitting row-wise Gabor transforms rather than the Fourier transform of $\hat f$ as a function on $\Z_N\times\Z_T$, $|E|$ can be as large as 
\begin{align*}
      T \left(\ceil{\frac{1}{2\theta}} - 1\right)    
\end{align*} 
while meeting the conditions for unique recovery of each row. However, this all relies on not just any given $M_i$ being less than $N/2E_{max}$ with high probability as $N\to \infty,$ but that the condition is satisfied for \textit{all} of the $M_i$'s. The crux of the proof thus lies in showing that, under the hypotheses of the theorem, 
\begin{align*}
    \mathbb{P}\left( M_{\max} < \frac{N}{2E_{\max}} \right)
\end{align*} 

converges to $1$ as $N\to \infty.$

As a consequence, if we would like to transmit a signal \( \mathbb{Z}_M \to \mathbb{C} \) and we know that the frequencies will be intercepted binomially, then to ensure a high probability of recovering the original signal, we should split it into as many rows as possible while keeping the number of entries per row large enough for  
\begin{align*}
    \mathbb{P}\left( M_{\max} < \frac{N}{2E_{\max}} \right)
\end{align*}
to be reasonably close to \(1\), and transmit the row-wise Gabor transforms.
\end{remark}

\begin{remark} An additional appeal of recovering $f$ row by row is that even if some rows are unrecoverable, if a sufficient number of rows can be recovered, we can apply Theorem \ref{Donoho Stark 2} to columns of $f$ and still manage to recover the rest of $f$. However, since 
\begin{align*}
    \lim_{N \to \infty} \prob{M_{\min} < \frac{N}{2E_{\max}}} = 0
\end{align*} to deal with $\displaystyle\theta \geq \frac{1}{2E_{\max}}$, we cannot expect a fixed positive proportion of rows to be recoverable as $N$ grows, unless we have additional information about the proportion of rows whose support is strictly smaller than that of the row with maximal support.

This motivates the assumptions in the following theorem, which—while its applications may be somewhat more limited than Theorem \ref{main theorem}—builds on Theorems \ref{Donoho Stark 1} and \ref{Donoho Stark 2} jointly to address $\displaystyle\theta \geq \frac{1}{2E_{\max}}.$ Although outside the scope of this paper, it may be worthwhile to further examine their combined use in settings where $N$ is fixed, rather than $N \to \infty.$ \end{remark}
\begin{theorem}\label{improvement}
Suppose $f: \Z_N \times \Z_T \to \C$, where $T: \N \to \N$ such that $T(N) = o\left(\sqrt{N}e^N\right),$ define $E_{\max}$ as above, and define 
\begin{equation*}
S_{\max}:= \max_{t \in \Z_N} |\supp_n{(\tilde{Gf}(t,n))}|.
\end{equation*} Suppose we transmit $Gf(m,a)$ for all $(m,a) \in \Z_N \times \Z_T$ and that the distribution of lost frequencies is binomial with probability $\displaystyle\theta = \frac{1}{2E_{\max}}+\delta$ for some $\delta \geq 0$. Suppose that 
\begin{equation}
    \left|\left\{a: |\text{supp}_t(f(t, a))| < {\frac{E_{\max}}{1 + 2\delta E_{\max}}}\right\}\right|>T\left(\frac{2S_{\max}-1}{2S_{\max}}\right).
\end{equation} As $N \to \infty$, $\prob{f\text{ can be recovered}} \to 1$.
\end{theorem}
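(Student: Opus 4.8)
The plan is to combine the two Donoho--Stark theorems exactly as anticipated in the preceding remark: first recover every row of small support by applying the one-dimensional version of Theorem \ref{Donoho Stark 1} to each row, and then recover the few remaining rows by applying Theorem \ref{Donoho Stark 2} column-wise. The binomial model enters only in the first stage, through a concentration argument guaranteeing, with probability tending to $1$, that all low-support rows satisfy the classical one-dimensional recovery condition simultaneously.

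\emph{Stage 1 (rows).} Fix a row $a$ and write $s_a := |\supp_t f(t,a)|$ and $M_a := M \cap \{Gf(m,a) : m \in \Z_N\}$, so that $|M_a| \sim \mathrm{Bin}(N,\theta)$ with $\E|M_a| = N\theta$. Call $a$ \emph{good} if $s_a < \tfrac{E_{\max}}{1+2\delta E_{\max}}$. For a good row,
\[
\frac{N}{2s_a} > \frac{N(1+2\delta E_{\max})}{2E_{\max}} = N\left(\frac{1}{2E_{\max}}+\delta\right) = N\theta,
\]
so the one-dimensional Donoho--Stark threshold $N/(2s_a)$ strictly exceeds the mean number of erasures by a fixed positive proportion. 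Hence, by a Chernoff/Hoeffding bound, $\prob{s_a|M_a| \ge N/2} = \prob{|M_a| \ge N/(2s_a)}$ decays exponentially in $N$, and a union bound over the at most $T$ good rows shows
\[
\prob{\exists \text{ good } a:\ s_a|M_a| \ge \tfrac{N}{2}} \longrightarrow 0
\]
as $N \to \infty$, precisely because $T = o(\sqrt{N}e^N)$ grows slower than the reciprocal of the per-row failure probability (this is the same estimate that drives Theorem \ref{main theorem}). On the complementary event, Theorem \ref{Donoho Stark 1} applied in each good row recovers $f(\cdot,a)$ exactly via Logan's method.

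\emph{Stage 2 (columns).} The hypothesis bounds the number of non-good rows: since the good rows number more than $T\tfrac{2S_{\max}-1}{2S_{\max}}$, there are fewer than $\tfrac{T}{2S_{\max}}$ rows with $s_a \ge \tfrac{E_{\max}}{1+2\delta E_{\max}}$. On the high-probability event of Stage 1, the set $U$ of still-unknown rows is contained in this collection, so $|U| < \tfrac{T}{2S_{\max}}$, and $U$ is the same for every column. For each fixed $t$, consider the column $g_t(a) := f(t,a)$, whose Fourier transform over $\Z_T$ is $\tilde{Gf}(t,\cdot)$, supported on at most $S_{\max}$ points; its only missing values are those indexed by $U$. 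Thus
\[
|\supp_n \tilde{Gf}(t,\cdot)|\cdot|U| \le S_{\max}\cdot|U| < S_{\max}\cdot\frac{T}{2S_{\max}} = \frac{T}{2},
\]
so Theorem \ref{Donoho Stark 2}, applied one-dimensionally over $\Z_T$ to each column, recovers $g_t$ and hence fills in all unknown entries. Combining the two stages recovers $f$ on the Stage-1 event, and since that event has probability tending to $1$, we conclude $\prob{f \text{ can be recovered}} \to 1$.

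The deterministic second stage is a direct application of Theorem \ref{Donoho Stark 2} together with the counting hypothesis; the real work --- and the main obstacle --- is Stage 1, where one must control \emph{all} good rows at once. The key quantitative input is the strict gap between the threshold $N/(2s_a)$ and the mean $N\theta$, which is exactly what the defining inequality $s_a < \tfrac{E_{\max}}{1+2\delta E_{\max}}$ encodes; turning this gap into a union bound that survives multiplication by $T$ is where the growth restriction $T = o(\sqrt{N}e^N)$ is indispensable, and care is needed to ensure the exponential decay rate is uniform over the good rows, the smallest gap (attained at the largest admissible good-row support) governing the estimate.
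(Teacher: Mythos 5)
Your proposal is correct and follows essentially the same two-stage argument as the paper: Stage 1 is the paper's use of its binomial-tail estimate (Lemma \ref{prob lemma}) to ensure all small-support rows satisfy the one-dimensional recovery condition simultaneously (the paper writes this as the independence product $\left(1 - \prob{X \geq Nk}\right)^T$ where you use a union bound over rows, an immaterial difference), and Stage 2 is identical to the paper's column-wise application of Theorem \ref{Donoho Stark 2}, using the counting hypothesis to bound the unrecovered rows by $T/(2S_{\max})$ so that the product with $S_{\max}$ stays below $T/2$. Your closing remark that the gap between $N/(2s_a)$ and $N\theta$ must be uniform over good rows is exactly how the paper handles this, via the integer bound $\max_{a \in A}|\supp_t f(t,a)| \le \lceil E_{\max}/(1+2\delta E_{\max})\rceil - 1$, which yields a fixed $k > \theta$ to feed into the tail estimate.
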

\section{Proofs of Theorems}

We prove Theorems \ref{main theorem} and \ref{improvement} using the following lemma.
\begin{lemma}\label{prob lemma}
    Let $X\sim B(N, \theta)$. Let $g:\N \to \R^\times$ be any function such that \begin{align*}
    \lim_{N \to \infty}\frac{g(N)}{\sqrt{N}e^N} = 0,
    \end{align*} i.e., $g(N) = o\left(\sqrt{N}e^N\right).$
    For $1 > k > \theta$ fixed,
    \begin{equation*}
\prob{X \geq Nk} = o\left(\frac{1}{g(N)}\right).        
    \end{equation*}
For $0 < k < \theta$ fixed,
\begin{equation*}
    \prob{X\leq Nk} = o\left(\frac{1}{g(N)}\right).
\end{equation*}
\end{lemma}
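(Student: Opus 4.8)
The plan is to control the binomial tails with a standard Chernoff (exponential Markov) estimate and then compare the resulting exponential decay with the largest growth rate permitted for $g$. Throughout we may assume $g > 0$, replacing $g$ by $|g|$ if necessary.

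First I would reduce both claims to a single sharp tail bound. Since $g(N) = o(\sqrt N e^N)$, it suffices to prove the stronger statements $\prob{X \geq Nk}\,\sqrt N e^N \to 0$ (for $k > \theta$) and $\prob{X \leq Nk}\,\sqrt N e^N \to 0$ (for $k < \theta$): factoring $\prob{X \geq Nk}\,g(N) = \big(\prob{X \geq Nk}\,\sqrt N e^N\big)\big(g(N)/(\sqrt N e^N)\big)$ exhibits the quantity of interest as a product of two vanishing factors, and likewise for the lower tail. Moreover the lower tail reduces to the upper tail, since $N - X \sim B(N,1-\theta)$ and $\{X \leq Nk\} = \{N - X \geq N(1-k)\}$ with $1 - k > 1 - \theta$, so the two cases have the same exponential rate. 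Hence I would concentrate on $\prob{X \geq Nk}$ with $k > \theta$.

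For the core estimate, Markov's inequality applied to $e^{sX}$ gives, for every $s > 0$, $\prob{X \geq Nk} \leq e^{-sNk}\,\E[e^{sX}] = e^{-sNk}(1-\theta+\theta e^s)^N$. Choosing $e^s = k(1-\theta)/((1-k)\theta)$, which exceeds $1$ precisely because $k > \theta$, optimizes the exponent and yields $\prob{X \geq Nk} \leq \exp(-N D(k\|\theta))$, where $D(k\|\theta) = k\ln(k/\theta) + (1-k)\ln((1-k)/(1-\theta)) > 0$ is the binary relative entropy. If a polynomial prefactor is wanted, the same rate together with an extra factor of order $N^{-1/2}$ comes from estimating $\sum_{j \geq Nk}\binom{N}{j}\theta^j(1-\theta)^{N-j}$ directly by Stirling's formula.

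Combining the two steps gives $\prob{X \geq Nk}\,\sqrt N e^N \leq \sqrt N\,\exp\!\big(N(1 - D(k\|\theta))\big)$, so the conclusion follows as soon as $D(k\|\theta) > 1$. I expect this final comparison to be the main obstacle. The Chernoff argument only delivers positivity of $D(k\|\theta)$, and for $k$ close to $\theta$ this rate is arbitrarily small, so the exponential decay need not on its own dominate the factor $e^N$. The real content is therefore a quantitative lower bound on the deviation: one must check, in the parameter regimes that actually arise in Theorems \ref{main theorem} and \ref{improvement} (where $k = 1/(2E_{\max})$ and $\theta$ is held below $k$), that $k - \theta$ is large enough to force $D(k\|\theta) > 1$, i.e.\ that the Chernoff rate genuinely beats $\sqrt N e^N$. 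Pinning down this relationship between $k$, $\theta$, and the target rate $e^N$—rather than the tail estimate itself—is where I would focus the effort.
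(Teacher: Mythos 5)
Your argument is, in substance, the same as the paper's: the paper bounds the upper tail by its largest term times a convergent geometric series and then applies Stirling's formula, which produces exactly your estimate $\prob{X \geq Nk} \leq C_{k,\theta}\, N^{-1/2} \exp\left(-N D(k\|\theta)\right)$ with $D(k\|\theta) = k\log(k/\theta) + (1-k)\log\left((1-k)/(1-\theta)\right)$, and it handles the lower tail by the same symmetry $N - X \sim B(N,1-\theta)$. The point at which you stopped---observing that this only yields the stated conclusion when $D(k\|\theta) > 1$, and that nothing in the hypothesis $1 > k > \theta$ forces this---is not a defect of your approach. It is a genuine flaw in the lemma itself, and you have put your finger on exactly the step where the paper's own proof breaks down.

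Indeed, the lemma as stated is false. Stirling gives a matching \emph{lower} bound, $\prob{X \geq Nk} \geq \prob{X = \lceil Nk\rceil} \geq c_{k,\theta}\, N^{-1/2} e^{-N D(k\|\theta)}$ for large $N$. Take $\theta = 2/5$ and $k = 1/2$ (admissible in Theorem \ref{main theorem} with $E_{\max}=1$), so that $D(k\|\theta) = \tfrac12\log\tfrac{25}{24} \approx 0.02 < 1$, and take $g(N) = \sqrt{N}e^{N}/\log N$, which satisfies the hypothesis $g(N) = o\left(\sqrt{N}e^{N}\right)$. Then
\begin{align*}
\prob{X \geq Nk}\, g(N) \;\geq\; \frac{c_{k,\theta}\, e^{N\left(1 - D(k\|\theta)\right)}}{\log N} \;\longrightarrow\; \infty,
\end{align*}
contradicting the claimed $\prob{X \geq Nk} = o\left(1/g(N)\right)$. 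The paper's proof conceals this at the final limit computation: having shown only that the bracketed exponent converges to the negative number $-D(k\|\theta)$, it replaces $\exp\left(N[\cdots]\right)$ by $\exp(-N)$, a substitution that is valid only when $D(k\|\theta) \geq 1$. Your proposed rescue---verifying $D(k\|\theta) > 1$ in the regime of Theorems \ref{main theorem} and \ref{improvement}---cannot succeed either: there $k = 1/(2E_{\max})$ and $\theta$ is only required to satisfy $\theta < k$, so $\theta$ may be arbitrarily close to $k$ and $D(k\|\theta)$ arbitrarily close to $0$. The statement becomes true (and both your Chernoff argument and the paper's geometric-series argument then close the proof verbatim) if the hypothesis on $g$ is strengthened to $g(N) = o\left(\sqrt{N}\, e^{N D(k\|\theta)}\right)$, with the corresponding replacement of the growth condition $T(N) = o\left(\sqrt{N}e^{N}\right)$ in the two theorems.
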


\begin{proof}[Proof of Lemma \ref{prob lemma}]
Note:    
\begin{align}
\prob{X \geq Nk} &= \sum_{y = \lceil Nk \rceil}^{N}\binom{N}{y}\theta^y(1-\theta)^{N-y}\notag 
\\ &\leq \sum_{n=0}^{\infty}\left(\max_{\lceil Nk \rceil \leq y \leq N-1}\frac{(N-y)\theta}{(y+1)(1-\theta)}\right)^n\binom{N}{\lceil  Nk \rceil}\theta^{\lceil  Nk \rceil}(1 - \theta)^{N - \lceil  Nk \rceil}\tag{because $\displaystyle\left(\frac{(N-y)\theta}{(y+1)(1-\theta)}\right)^n$ is the ratio between consecutive summands}
\\ &= \sum_{n=1}^{\infty} \left( \frac{(N-\lceil  Nk \rceil)\theta}{(\lceil  Nk \rceil+1)(1-\theta)}\right)^n\binom{N}{\lceil  Nk \rceil}\theta^{\lceil  Nk \rceil}(1 - \theta)^{N - \lceil  Nk \rceil}\notag
\\
&= \left( 
\frac{1}{1 - \left(\frac{(N - \lceil  Nk \rceil)\theta}{(\lceil  Nk \rceil + 1)(1 - \theta)}\right)} \right)
\binom{N}{\lceil  Nk \rceil}\theta^{\lceil  Nk \rceil}(1 - \theta)^{N - \lceil  Nk \rceil}.\label{bound}
\end{align}\\

\noindent We consider the limits of 
\[
\frac{1}{1 - \left(\frac{(N - \lceil  Nk \rceil)\theta}{(\lceil  Nk \rceil + 1)(1 - \theta)}\right)}
\quad \text{and} \quad
\binom{N}{\lceil  Nk \rceil}\theta^{\lceil  Nk \rceil}(1 - \theta)^{N - \lceil  Nk \rceil}
\]
separately.\\
First, observe that
\[
\frac{N -  Nk  - 1}{ Nk  + 2} 
\leq \frac{N - \lceil  Nk \rceil}{\lceil  Nk \rceil + 1} 
\leq \frac{N -  Nk }{ Nk  + 1}.
\]
Applying the squeeze theorem then gives
\[
\lim_{N \to \infty} \frac{N - \lceil  Nk \rceil}{\lceil  Nk \rceil + 1} = \frac{1-k}{k}.
\]
Next, note that
\[
\frac{\theta}{1 - \theta} = -1 + \frac{1}{1 - \theta},
\]
which is monotone increasing in $\theta$. Since $\theta <  k $, it follows that
\[
\frac{\theta}{1 - \theta} < \frac{ k }{1 -  k }.
\]
Thus, combining our previous estimates, we obtain
\begin{align*}
    \lim_{N \to \infty} \frac{(N - \lceil  Nk \rceil)\theta}{(\lceil  Nk \rceil + 1)(1 - \theta)} 
    < \frac{k(1-k)}{k(1-k)} = 1,
\end{align*}
which immediately implies
\begin{align}
    \lim_{N \to \infty} \frac{1}{1 - \left(\frac{(N - \lceil Nk\rceil)\theta}{(\lceil Nk\rceil + 1)(1 - \theta)}\right)} = C_{\theta,k}, \label{constant}
\end{align}
where $C_{\theta,k}$ is a finite constant depending only on $\theta$ and $k$. Next, we show that the second term, multiplied by $g(N)$, vanishes in the limit:
\begin{align*}
    \lim_{N \to \infty} g(N) \binom{N}{\lceil Nk\rceil} \theta^{\lceil Nk\rceil} (1 - \theta)^{N - \lceil Nk\rceil} = 0.
\end{align*}
To see this, we apply Stirling's approximation,  $\log(n!)=n\log(n)-n+1/2\log(2\pi n) + O(1/n),$ to get $$\binom{N}{mN} = N\left[-m\log(m) - (1 - m)\log(1-m)\right] + O\left(\frac{1}{N}\right)+\frac{1}{2}\log\left(\frac{1}{2\pi Nm(1-m)}\right)$$ for fixed $0<m<1.$ This gives us
\small
\begin{align*}
&g(N) \binom{N}{\lceil Nk\rceil} \theta^{\lceil Nk\rceil} (1 - \theta)^{N - \lceil Nk\rceil} \\
&= \frac{g(N) \cdot \exp\Bigg(N \Big[ -\frac{\lceil Nk \rceil}{N} \log\Big(\frac{\lceil Nk \rceil}{N}\Big)
- \Big(1-\frac{\lceil Nk \rceil}{N}\Big)\log\Big(1-\frac{\lceil Nk \rceil}{N}\Big) \Big] + O\Big(\frac{1}{N}\Big) \Bigg)}{\sqrt{2 \pi N \frac{\lceil Nk \rceil}{N} \left(1-\frac{\lceil Nk \rceil}{N}\right)}} \\
&\quad \cdot \theta^{\lceil Nk \rceil} (1 - \theta)^{N - \lceil  Nk  \rceil}.
\end{align*}

\begin{align*}
&= \frac{g(N)\cdot\text{exp}\left(N \left[ \left(- k  + O\left(\frac{1}{N}\right)\right)\log\left( k  + O\left(\frac{1}{N}\right)\right)-\left(1-\left( k  + O\left(\frac{1}{N}\right)\right)\right)\log\left(1- k  + O\left(\frac{1}{N}\right)\right)\right] \right)}{\sqrt{2\pi N \left( k  + O\left(\frac{1}{N}\right)\right)\left(1- k  + O\left(\frac{1}{N}\right)\right)}}\\
&\cdot\text{exp}\left(\ceil{ Nk }\log(\theta) + \left( N - \ceil{ Nk }\right) \log(1-\theta)+ O\left(\frac{1}{N}\right)\right)
\\
&=\frac{D_{k,\theta}g(N)}{\sqrt{N}}\cdot\text{exp}\left(N \left[ - k  \log\left( k  + O\left(\frac{1}{N}\right)\right)-\left(1- k  \right)\log\left(1- k  + O\left(\frac{1}{N}\right)\right)\right. \right.
\\
&+ \left.\left.  k \log(\theta) + \left( 1 -  k \right) \log(1-\theta)\right]+O\left(\frac{1}{N}\right)\right)
\end{align*}
where $D_{k,\theta}$ is a constant depending on $k$ and $\theta$. \\
Note that
\begin{align*}
&- k  \log\left( k  \right) 
- \left(1 -  k  \right) \log\left(1 -  k  \right) 
+  k  \log(\theta) 
+ \left(1 -  k \right) \log(1-\theta) \\
&= \left(1 -  k \right) \log\left(\frac{x-1}{k^{-1}-1}\right) 
- \log\left(xk\right),
\end{align*}
where we define $\displaystyle x := \frac{1}{\theta}$.\\
Next, observe that the derivative with respect to $x$ is
\begin{align}
\frac{\partial}{\partial x} \Bigg[ \left(1 -  k \right) \log\left(\frac{x-1}{k^{-1}-1}\right) 
- \log\left(xk\right) \Bigg] 
= \frac{x -  k^{-1}}{ k^{-1} x (1 - x)}, \label{derivative}
\end{align}
which vanishes at $x =  k^{-1}$. \\
Since $k^{-1}>1$, the derivative in \eqref{derivative} is negative for all $x >  k^{-1}$, the function is strictly decreasing in this regime. Consequently, for all $x >  k^{-1}$, we have
\begin{align*}
\left(1 -  k \right) \log\left(\frac{x-1}{k^{-1}-1}\right) 
- \log\left(xk\right)
< \left(1 -  k \right) \log\left(\frac{k^{-1}-1}{k^{-1}-1}\right) 
- \log\left(k^{-1}k\right) = 0.
\end{align*}
Thus, for all $ k > \theta$, we equivalently have:
\begin{align*}
   - k  \log\left( k  \right) 
- \left(1 -  k  \right) \log\left(1 -  k  \right) 
+  k  \log(\theta) 
+ \left(1 -  k \right) \log(1-\theta)< 0.
\end{align*}
Therefore,
\begin{align*}
&\lim_{N \to \infty}  - k  \log\left( k  + O\left(\frac{1}{N}\right)\right)-\left(1- k  \right)\log\left(1- k  + O\left(\frac{1}{N}\right)\right)+  k \log(\theta) + \left( 1 -  k \right) \log(1-\theta)\\
&= - k  \log\left( k  \right) 
- \left(1 -  k  \right) \log\left(1 -  k  \right) 
+  k  \log(\theta) 
+ \left(1 -  k \right) \log(1-\theta)< 0.
\end{align*}
Hence,
\begin{align*}
&\lim_{N \to \infty}\frac{D_{k,\theta}g(N)}{\sqrt{N}}\cdot\text{exp}\left(N \left[ - k  \log\left( k  + O\left(\frac{1}{N}\right)\right)-\left(1- k  \right)\log\left(1- k  + O\left(\frac{1}{N}\right)\right)\right. \right.
\\
&+ \left.\left.  k \log(\theta) + \left( 1 -  k \right) \log(1-\theta)\right]+O\left(\frac{1}{N}\right)\right)
\\
&= \lim_{N \to \infty}\frac{D_{k,\theta}g(N)}{\sqrt{N}}\cdot\text{exp}(-N) = 0,
\end{align*}
so we have shown that 
\begin{align*}
    \lim_{N \to \infty}g(N)\binom{N}{\lceil  Nk \rceil}\theta^{\lceil  Nk \rceil}(1 - \theta)^{N - \lceil  Nk \rceil} = 0,
\end{align*}
i.e., 
\begin{align*}
   \binom{N}{\lceil  Nk \rceil}\theta^{\lceil  Nk \rceil}(1 - \theta)^{N - \lceil  Nk \rceil} = o\left(\frac{1}{g(N)}\right). 
\end{align*}
Thus, returning to [\ref{bound}] and [\ref{constant}], we have
\begin{align*}
    \prob{X \geq Nk} &\leq  \left( 
\frac{1}{1 - \left(\frac{(N - \lceil  Nk \rceil)\theta}{(\lceil  Nk \rceil + 1)(1 - \theta)}\right)} \right)
\binom{N}{\lceil  Nk \rceil}\theta^{\lceil  Nk \rceil}(1 - \theta)^{N - \lceil  Nk \rceil}
\\
&=o\left(\frac{1}{g(N)}\right),
\end{align*}
so we have now proved the first part of the lemma. To prove the latter half of the lemma, we note that if $X \sim B(N,\theta)$, then $N - X \sim B(N, 1 - \theta)$. Applying the result that we have just proved to the random variable $N-X$, we have
\begin{align*}
    \prob{N - X \geq Nk'} = o\left( \frac{1}{g(N)}\right)
\end{align*} for $1 > k' > 1 - \theta$. Now note:
\begin{align*}
 \prob{N - X \geq Nk'} = \prob{X \leq N(1-k')}   
\end{align*} and the condition $1 > k' > 1 - \theta$ is equivalent to $\theta > 1 - k' > 0$. Thus, renaming $k:= 1-k'$, we now have that
\begin{align*}
    \prob{X \leq Nk} = o\left(\frac{1}{g(N)}\right)
\end{align*} for $\theta> k> 0$ and the lemma is proved.
\end{proof}
\begin{proof}[Proof of Theorem \ref{main theorem}]
To begin the proof, we note that 
\begin{align*}
    \mathbb{P}\left( M_{\max} < \frac{N}{2E_{\max}}\right) =\left(\mathbb{P}\left(X < \frac{N}{2E_{\max}}\right)\right)^T = \left(1 - \prob{X \geq\frac{N}{2E_{\max}}}\right)^T
\end{align*}
where $X\sim B(N, \theta)$. 

\noindent By the first part of Lemma \ref{prob lemma}, taking $\displaystyle k = \frac{1}{2E_{\max}}$ and letting $g(N)$ be a function such that $\max\{N, T(N) \}\leq g(N),$ $g(N) = o\left(\sqrt{N}e^N\right),$ we have
\begin{align*}
    \prob{X \geq \frac{N}{2E_{\max}}} =o\left(\frac{1}{g(N)}\right)
\end{align*}
for $\displaystyle\theta < \frac{1}{2E_{\max}}$. We therefore have
\begin{align*}
&\lim_{N \to \infty}\mathbb{P}\left( M_{\max} < \frac{N}{2E_{\max}}\right) = \lim_{N \to \infty}\left(1 - o\left(\frac{1}{g(N)}\right) \right)^{T} \geq \lim_{N \to \infty}\left(1 - o\left(\frac{1}{g(N)}\right) \right)^{g(N)}
\\&= \lim_{g(N) \to \infty}\left(1 - o\left(\frac{1}{g(N)}\right)\right)^{g(N)}=1
\end{align*}
for $\displaystyle\theta < \frac{1}{2E_{\max}}$.
\\\\
Likewise, taking $\displaystyle k = \frac{1}{2E_{\max}}$ again,
\begin{align*}
    \prob{M_{\min}<\frac{N}{2E_{\max}}} &= 1 - \prob{X \geq \frac{N}{2E_{\max}}}^T
    \\ &=1 - \left(1 - \prob{X< \frac{N}{2E_{\max}}} \right)^T
    \\ &\leq 1 - \left(1 - \prob{X\leq \frac{N}{2E_{\max}}} \right)^T
    \\ &= 1 - \left(1 - o\left(\frac{1}{g(N)}\right)\right)^T
\end{align*}
for $\displaystyle\theta > \frac{1}{2E_{\max}}$ by the second part of Lemma \ref{prob lemma}. Therefore,
\begin{align*}
    0&\leq\lim_{N \to \infty}\prob{M_{\min}<\frac{N}{2E_{\max}}} \leq \lim_{N \to \infty}1 - \left(1 - o\left(\frac{1}{g(N)}\right)\right)^T \\&\leq \lim_{N \to \infty}1 - \left(1 - o\left(\frac{1}{g(N)}\right)\right)^{g(N)}= 1-1 = 0
\end{align*}
for $\displaystyle \theta > \frac{1}{2E_{\max}}$. Therefore,
\begin{align*}
\lim_{N \to \infty}\prob{M_{\min}<\frac{N}{2E_{\max}}} = 0    
\end{align*}
for $\displaystyle \theta > \frac{1}{2E_{\max}}$, concluding the proof of the theorem.
\end{proof}
\begin{proof}[Proof of Theorem \ref{improvement}]
Define 
\begin{align*}
    A: &= \left\{a: |\text{supp}_t(f(t, a))| < {\frac{E_{\max}}{1 + 2\delta E_{\max}}}\right\}\\
    &= \left\{a: |\text{supp}_t(f(t, a))| \leq \ceil{\frac{E_{\max}}{1 + 2\delta E_{\max}}} - 1\right\}.   
\end{align*}
We claim that
\begin{equation*}
    \lim_{N \to \infty}\prob{f(\text{---},a)\text{ recovered }\forall a \in A} = 1.
\end{equation*}
Note: 
\begin{align*}
    \prob{f(\text{---},a)\text{ recovered }\forall a \in A} &= \prob{|M \cap \{(t,a): t \in \Z_N\}| < \frac{N}{2|\supp_t(f(t,a))|} \forall a \in A}
    \\
    &\geq \prob{X <\frac{N}{2\max_{a \in A}|\supp_t(f(t,a))|}}^{|A|}
    \\
    &\geq \prob{X <\frac{N}{2\max_{a \in A}|\supp_t(f(t,a))|}}^T
    \\
    &\geq \prob{X<\frac{N}{2\left(\ceil{\frac{E_{\max}}{1 + 2\delta E_{\max}}} - 1\right)}}^T
    \\
    &=\left(1 - \prob{X\geq\frac{N}{2\left(\ceil{\frac{E_{\max}}{1 + 2\delta E_{\max}}} - 1\right)}}\right)^T
\end{align*}
where $X \sim B(N, \theta).$ Since
\begin{equation*}
   \frac{1}{2\left(\ceil{\frac{E_{\max}}{1 + 2\delta E_{\max}}} - 1\right)} > \frac{1}{2\left(\frac{E_{\max}}{1 + 2\delta E_{\max}}\right)} =  \frac{1 + 2\delta E_{\max}}{2E_{\max}} = \theta,
\end{equation*}
by Lemma \ref{prob lemma},
\begin{align*}
\prob{X\geq\frac{N}{2\left(\ceil{\frac{E_{\max}}{1 + 2\delta E_{\max}}} - 1\right)}} = o\left(\frac{1}{g(N)}\right)    
\end{align*}
where we take $g(N)$ to be a function such that $\max\{N, T(N) \}\leq g(N),$ $g(N) = o\left(\sqrt{N}e^N\right).$ Thus,
\begin{align*}
    &\lim_{N \to \infty}\left(1 - \prob{X\geq\frac{N}{2\left(\ceil{\frac{E_{\max}}{1 + 2\delta E_{\max}}} - 1\right)}}\right)^T = \lim_{N \to \infty}\left(1 - o\left(\frac{1}{g(N)}\right)\right)^T \\&\geq \lim_{N \to \infty}\left(1 - o\left(\frac{1}{g(N)}\right)\right)^{g(N)}=\lim_{g(N) \to \infty}\left(1 - o\left(\frac{1}{g(N)}\right)\right)^{g(N)} =1.
\end{align*}
So, 
\begin{equation*}
    \lim_{N \to \infty}\prob{f(\text{---},a)\text{ recovered }\forall a \in A} = 1,
\end{equation*}
which implies
\begin{equation*}
    \lim_{N \to \infty}\prob{\text{more than }T\left(\frac{2S_{\max}-1}{2S_{\max}}\right)\text{ rows recovered}} = 1 \because |A| > T\left(\frac{2S_{\max}-1}{2S_{\max}}\right).
\end{equation*}
Thus, as $N \to \infty$, the probability in each column (i.e., the sets $\{(t,a): a \in \Z_T\}$) of successfully transmitting more than $\displaystyle T\left(\frac{2S_{\max}-1}{2S_{\max}}\right)$ values of $f$ converges to $1$. With probability approaching $1$ as $N \to \infty$, we have in each $t^{th}$ column $\{(t,a): a \in \Z_T\}$, we get the following:
\begin{align*}
    &|\{\text{values of }f(t,a)\text{ missing}: a \in \Z_T\}||\supp_n(\tilde{Gf}(t,n))|\\
    &=(T- |\{\text{values of }f(t,a)\text{ recovered}: a \in \Z_T\}|)|\supp_n(\tilde{Gf}(t,n))|
    \\&<\left(T- T\left(\frac{2S_{\max}-1}{2S_{\max}}\right)\right)|\supp_n(\tilde{Gf}(t,n))|\\
    &=\frac{T|\supp_n(\tilde{Gf}(t,n))|}{2S_{\max}}\\
    &\leq \frac{TS_{\max}}{2S_{\max}}
    \\
    &= \frac{T}{2}.
\end{align*}
Thus, by Theorem \ref{Donoho Stark 2}, the probability of unique recovery of each of the columns converges to $1$ as $N \to \infty$.
\end{proof}

\section{Future Work}
In this paper, the distribution considered for lost frequencies was the binomial distribution. Future work could examine how much improvement results from transmitting row-wise Gabor transforms, rather than the Fourier transform of the function in $ \Z_N \times \Z_T,$ when frequencies are lost according to other distributions. 

Our work also bears similarities to \cite{2025additiveenergyuncertaintyprinciple} in that we obtain an improved unique recovery condition by considering the structure of the support of our function $f$ or its Fourier transform $\widehat{f}.$ An open question is whether or not a new uncertainty principle could arise from considering row- or column-wise support/Fourier support structure. In addition, although we apply the classical recovery condition to our rows and columns for simplicity, our results can easily be made sharper by applying sharper recovery results, such as those found in \cite{2025additiveenergyuncertaintyprinciple}.

It would also be meaningful to consider the scenario in which we can only send part of the signal and seek an optimal, though maybe imperfect, recovery. In this case, transmitting a combinations of row- and column-wise Gabor transforms in a greedy manner on their supports seems to be optimal. To avoid duplicates when sending the intersection of a row or column, we modify the grids: once we send a row or column, we remove it from the previous grid and then obtain a new grid; we always work on the new grid. In this way, we send the part with the most information of the signal.

Moreover, since Theorems \ref{main theorem} and \ref{improvement} both take $N \to \infty,$ it would be natural to explore rates of convergence to determine how best to split a signal $f: \Z_M \to \C$ for a given $M$ into rows to maximize the probability of recovery if we transmit the row-wise Gabor transforms. It may also be worthwhile to explore set-ups similar to that of Theorem \ref{improvement}, where there is a positive probability that some (but not all) rows can be recovered, after which we attempt column-wise recovery, but rather than taking $N \to \infty,$ we try to prove results for general $N.$ For example, given $\delta > 0$, we explore how large must $N$ be for failure probability less than $\delta$, providing engineering guidance.

Lastly, while our paper focuses on row- and column-wise Gabor transforms, we can also consider other choices of window functions, including but not limited to, lines and circles in $\Z_N \times \Z_T.$

\section{Acknowledgements}
This paper was written as part of the SMALL REU Program 2025. We appreciate the support of Williams College, as well as funding from the Dr. Herchel Smith Fellowship Fund, Princeton University, The Finnerty Fund, The Winston Churchill Foundation of the United States, Yale University and NSF grant DMS-2241623. A.I. was supported in part by the National Science Foundation under grants no. HDR TRIPODS-1934962, NSF DMS 2154232, and NSF DMS 2506858 during work on this paper.

\bibliographystyle{alpha}
\bibliography{bibliography}

\end{document}